\definecolor{light}{gray}{0.9}
\definecolor{medium}{gray}{0.8}
\newtheorem{theorem}{Theorem}
\newtheorem{proposition}[theorem]{Proposition}
\theoremstyle{definition}
\newtheorem{remark}[theorem]{Remark}
\numberwithin{equation}{section}
\def\ZZ{{\mathbb Z}}
\def\NN{{\mathbb N}}
\def\RR{{\mathbb R}}
\def\QQ{{\mathbb Q}}
\def\11{{\mathbb 1}}
\def\cF{{\mathcal F}}
\def\conv{\operatorname{conv}}
\def\ttt#1{\texttt{#1}}
\let\epsilon=\varepsilon
\def\Vol{\operatorname{Vol}}
\def\aff{\operatorname{aff}}
\def\Ht{\operatorname{Ht}}
\def\strut{\vphantom{\Large(} }
\begin{document}
	

\title{Polytope volume in Normaliz}

\author{Winfried Bruns}
\address{Universit\"at Osnabr\"uck, Institut f\"ur Mathematik, 49069 Osnabr\"uck, Germany}
\email{wbruns@uos.de}
\date{}
\dedicatory{To the memory of Wolmer Vasconcelos\\and to Rafael Villareal on his 70th birthday}

\subjclass[2010]{52B55, 52A38}
\keywords{polytope, volume, Lawrence algorithm}

\begin{abstract}
We survey the computation of polytope volumes by the algorithms of Normaliz to which the Lawrence algorithm has recently been added. It has enabled us to master volume computations for polytopes from social choice in dimension $119$. This challenge required a sophisticated implementation of the Lawrence algorithm.
\end{abstract}
\maketitle

\section{Introduction}

About 20 years ago Amelia Taylor asked the author whether Normaliz \cite{Nmz} could compute polytope volumes. It was easy to include this computation goal into the triangulation based ``primal algorithm''. Since then, polytope volumes have played an important role in the development of Normaliz, and in recent years specific algorithms have been added.

Polytope volumes can be interpreted as degrees of projective toric varieties and multiplicities of monomial algebras. In 2011, Bogdan Ichim pointed out their applications in social choice. Since then they have been a driving challenge for the volume algorithms in Normaliz whose history we sketch briefly. Before going on, let us emphasize that Normaliz computes lattice normalized volumes that for rational polytopes are rational numbers. Euclidean volumes, if asked for by the user, are derived from them. 
 
In social choice, polytope volumes are interpreted as probabilities of certain paradoxa and quality measures of voting schemes. See the books by Gehrlein and Lepelley \cite{GL1}, \cite{GL2}. These applications become rapidly very difficult since the relevant polytopes explode in dimension: for  $n$ candidates they are cross-sections of cones of dimension $n!$. 
In their paper \cite[p.\ 382]{LLS} of 2008 Lepelley, Louichi and Smaoui state:
\begin{quotation}
Consequently, it is not possible to analyze
four candidate elections, where the total number of variables (possible
preference rankings) is $24$. We hope that further developments of these algorithms will enable the overcoming of this difficulty.
\end{quotation}
With the efficient parallelization of Normaliz in 2012 and the addition of Schürmann's symmetrization method \cite{Sch}, Normaliz could compute  a wide variety of social choice polytopes for $4$ candidates; see Bruns, Ichim and Söger \cite{BIS2}.

When the limitations of the available algorithms became visible in 2017, the author implemented an algorithm for polytope volumes by descent in the face lattice. It is implicitly based on a reverse lexicographic triangulation of the polytope, but does not compute the triangulation explicitly. It brought a significant improvement in computation times for polytopes defined by inequalities, and made more computations for elections with $4$ candidates possible, as shown in Bruns and Ichim \cite{Descent}.

But the case $n=5$, $n!=120$, remained elusive. The breakthrough came with the Normaliz implementation of Lawrence's algorithm \cite{Law}. It is based on a duality between ``generic'' triangulations of the dual cone and signed decompositions of the ``primal'' polytope into simplices. In principle, signed decompositions are as good as ordinary ones for volume computation, but they present hard numerical problems. The rational arithmetic of Normaliz can cope with them, but it must pay by computation time. The applications to $5$ candidates elections have been documented by Bruns and Ichim \cite{5cand}. They would have been unreachable without the sophisticated implementation that we explain in Section \ref{Lawrence}.

Simultaneously with the Lawrence algorithm, we introduced a refinement of the descent algorithm that identifies isomorphic faces in the descent. Isomorphism classes, as explained in \cite{nmz_auto}, are computed by McKay and Piperno's package nauty \cite{nauty}. Even for nauty, isomorphism classes are expensive, but they help in the volume computation of some classical polytopes.

Section \ref{prelim} gives a very brief introduction to the terminology of this note. It explains basic results that are used in the computation of lattice normalized volume. Section \ref{vol_alg} contains an overview of the volume algorithms and explains them, in particular Lawrence's algorithm. The final section \ref{bench} lists computation times, with emphasis on the new algorithms, signed decomposition and descent with the exploitation of isomorphism types. They are not only applied to polytopes from social choice, but also to classical polytopes, for example cubes, Birkhoff polytopes and linear ordering polytopes.

The package vinci \cite{vinci} contains algorithms for polytope volumes. See \cite{Descent} for a comparative study of computation times and memory usages. Because of its floating point arithmetic, vinci is often faster than Normaliz, but its results come without an error bound. Because of the extreme numerical difficulty, its implementation of the Lawrence algorithm fails reliable results already for polytopes coming from $4$ candidates elections; see Remark \ref{fail_vinci}.

This note is dedicated to my friends  Wolmer Vasconcelos and Rafael Villarreal. Their constant support has been very encouraging in the $25$ years of the Normaliz project. One of the first third party publications citing Normaliz is their paper \cite{Mult} with Delfino, Taylor and Weininger. The example collection of Normaliz still contains input files supplied by Rafael a quarter of a century ago, and his book \cite{Rafael} documents numerous applications.

\emph{Acknowledgement.}\enspace The author was partially supported by the DFG grant BR 688/26-1. He thanks Ulrich von der Ohe for fruitful discussions.

\section{Preliminaries}\label{prelim}

We refer the reader to \cite{BrGu} for discrete convex geometry. Here we content ourselves to a very brief overview.

\subsection{Cones and polytopes}
A \emph{cone} $C$ in the real space $\RR^n$ is the intersection of finitely many linear halfspaces:
\begin{equation}\label{half}
C=\bigcap_{i=1}^s H_i^+,
\end{equation}
and for each $i$ the halfspace $H_i^+$ is the set $\{x\in\RR^n:\sigma_i(x)\ge 0 \}$ for a linear form $\sigma_i$ in the dual space $(\RR^n)^*$. By the theorem of Minkowski-Weyl, one can equivalently describe cones as the conical set generated by finitely many vectors $v_j\in \RR^n$,
\begin{equation}\label{gen}
C=\{q_1v_1+\dots+q_nv_m: q_1,\dots,q_m\ge 0 \}.
\end{equation}
Since we want to deal only with polytopes and cones derived from them, we can restrict our cones to a subclass: $C$ is a \emph{pointed} cone: if $-x\in C$ for $x\in C$, then $x=0$. If $C$ is pointed, then the elements in a minimal set of generators as in \eqref{gen} are uniquely determined up to positive scalars, and the sets $\RR_+ v_i$ are the \emph{extreme rays} of $C$.

The cone $C$ is \emph{rational} if the vectors $v_i$ can be chosen in $\QQ^n$, and therefore in $\ZZ^n$. Then each extreme ray contains exactly one \emph{primitive} integral vector, namely one  with coprime coordinates. It is called an \emph{extreme integral generator}.

The dimension of $C$ is the dimension of the vector subspace $\RR_+ C$. If $\dim C < n$, then the halfspaces $H_i^+$ in \eqref{half} are not uniquely determined, but the halfspaces $H_i^+\cap \RR C$ of $\RR C$ in an irredundant representation $C=\bigcap _i (H_i^+\cap \RR C)$ are. They intersect $C$ in its \emph{facets}. More generally, a \emph{face} of $C$ is the intersection of $C$ with a hyperplane that has $C$ inside one of the two closed halfspaces it defines. A face of $C$ is again a cone.

In  Sections \ref{signed_dec} and \ref{Lawrence} the dual cone $C^*$ will play a central role. Its definition does not only depend on the intrinsic structure of $C$, but also on the ambient space. Therefore we will then assume that $C$ is a full dimensional pointed cone: $\dim C =n$. Consequently the halfspaces $H_i^+$ in an irredundant representation \eqref{half} are uniquely determined, and there is a unique primitive choice for $\sigma_i$. These linear forms $\sigma_i$ are called the \emph{support forms} of $C$. In this note the hyperplanes $H_i$ are the \emph{support hyperplanes} of $C$.
The \emph{dual cone} 
$$
C^*=\{\lambda\in (\RR^n)^*: \lambda(x)\ge 0 \text{ for all } x\in C\}
$$
is full dimensional and pointed as well. Under the natural identification $\RR^n = (\RR^n)^{**}$ the bidual cone $C^{**}$ is identified with $C$: the extreme rays of $C^*$ are the linear forms defining the facets of $C$, and vice versa. In the rational case the extreme integral generators of $C^*$ are the support forms of $C$, and vice versa.

A \emph{polytope} $P$ is the convex hull of finitely many points in a real space $\RR^n$. Our polytopes will be rational: such polytopes have vertices in $\QQ^n$. Computationally, polytopes are treated as compact intersections of pointed cones and hyperplanes. The hyperplane is defined by a linear form with integral coefficients, called \emph{degree}, such that
\begin{equation}\label{degree}
P = \{x\in C: \deg x = 1\}.
\end{equation}
The intersection $P$ is compact (and nonempty) if and only if $C\neq 0$ and $\deg x >0$ for $x\in C$, $x\neq 0$. This is not a restriction of generality: if $P\subset \RR^n$ is not given as in \eqref{degree}, then we can easily re-embed it suitably: we identify $P$ with $P'=P\times\{1\}\subset \RR^{n+1}$, and choose $C=\RR_+P'$.

\subsection{Lattice normalized volume}

Normaliz computes lattice normalized volume. We review this notion with emphasis of its computation. The reader can find more details in \cite[Sect. 3]{Descent}. Let $P\subset\RR^n$ be a rational polytope.
The affine hull $A=\aff P $ is a rational affine subspace of $\RR^n$. First assume that $0\in A$. Then $L=(\aff P) \cap\ZZ^n$ is a subgroup of $\ZZ^n$ of rank $d=\dim P$ (and $\ZZ^n/L$ is torsionfree). Choose a $\ZZ$-basis $v_1,\dots,v_d$ of $L$. The \emph{lattice (normalized) volume} $\Vol$ on $A$ is the Lebesgue measure on $A$ scaled in such a way that the simplex $\conv(0,v_1,\dots,v_d)$ has measure $1$. The definition is independent of the choice of $v_1,\dots,v_d$ since all invertible $d\times d$ matrices over $\ZZ$ have determinant $\pm 1$. If $0\notin A$, then we replace $A$ by a translate $A_0=A-w$, $w\in A$, and set $\Vol X =\Vol (X-w) $ for  $X\subset A$. This definition is independent of the choice of $w$ since $\Vol$ is translation invariant on $A_0$. Note that the polytope containing a single point $x\in\QQ^n$ has lattice volume $1$. If desired, the definition of lattice volume can be extended to arbitrary measurable subsets of $A$, and Normaliz does it for algebraic polytopes.

If $P$ is a lattice polytope, i.e., a polytope with vertices in $\ZZ^n$, then $\Vol P $ is an integer. For an arbitrary rational polytope we have $\Vol P \in\QQ$. As a consequence, $\Vol P $ can be computed precisely by rational arithmetic. This is not true for Euclidean volume in general: the diagonal of the unit square has length $\sqrt 2$. 

A second invariant we need is the lattice height of a \emph{rational} point $x$ over a rational subspace $H\neq\emptyset$. More generally, one can consider points $x$ such that $\aff (H,x) $ is again rational; for example, this is the case if $H$ is a hyperplane in $\RR^n$. If $x\in H$, we set $\Ht_H(x)=0$. Otherwise let $A=\aff (H,x) $ so that $H$ is a hyperplane in $A$. 

Assume first that $0\in H$. Then $H$ is cut out from $A$ by an equation $\lambda(y)=0$ with a primitive $\ZZ$-linear form $\lambda$ on $L=A\cap \ZZ^n$. With this choice of $\lambda$, $\Ht_H(x)=|\lambda(x)|$ is called the\emph{ lattice height} of $x$ over $H$. (There are exactly two choices for $\lambda$, differing by the factor $-1$.) If $0\notin H$, then we choose an auxiliary point $v\in H$, replace $H$ by $H-v$, $A$ by $A-v$ and $x$ by $x-v$. In the algorithms we will only have to deal with the case $0\in H$.
If $P$ is a rational polytope  and $F$ is a facet or, more generally, a face of $P$, then we set $\Ht_F(x)=\Ht_H(x)$ where $H=\aff F $.

The following proposition relates lattice volume and lattice height.

\begin{proposition}\label{pyramid}
Let $P$ be a rational polytope and $v\in P$ a vertex of $P$ such that there is a single facet $F$ of $P$ with $v\notin F$. Then
$$
\Vol P = \Ht_F(v)\Vol F .
$$
\end{proposition}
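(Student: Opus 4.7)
The plan is to show first that the hypothesis forces $P$ to be a genuine pyramid with apex $v$ and base $F$, i.e.\ $P=\conv(\{v\}\cup F)$. This is the conceptual heart: every vertex $w$ of $P$ lies on at least $\dim P$ facets, so if $w\neq v$ were not in $F$, then $w$ would be the intersection of facets all of which are among the ``other'' facets; but each of those contains $v$ by hypothesis, so their intersection would contain $v$, forcing $v=w$. Hence every vertex of $P$ distinct from $v$ lies in $F$, and $P=\conv(\{v\}\cup F)$.

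Next I would reduce to the normalized situation $0\in\aff F$ by translating by some $w\in\aff F$; both $\Vol$ and $\Ht_F$ are translation invariant on the affine hull, so this is harmless. Set $H=\aff F$, $A=\aff P$, $L=H\cap\ZZ^n$, and $M=A\cap\ZZ^n$. Because $v\notin H$ (otherwise $\dim P=\dim F$, contradicting that $F$ is a facet), $H$ is a hyperplane in the $d$-dimensional space $A$, cut out by a primitive $\ZZ$-linear form $\lambda$ on $M$ with $\Ht_F(v)=|\lambda(v)|$. Since $\lambda\colon M\to\ZZ$ is surjective, I can pick $e_d\in M$ with $\lambda(e_d)=1$ and extend a $\ZZ$-basis $e_1,\dots,e_{d-1}$ of $L$ to the $\ZZ$-basis $e_1,\dots,e_d$ of $M$.

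In the coordinates given by this basis, $H$ is the hyperplane $\{x_d=0\}$ and $\lambda$ reads off the $d$-th coordinate; thus the $d$-th coordinate of $v$ is $\pm\Ht_F(v)$. Working with a triangulation of $F$ into lattice-rational simplices $\Delta_j=\conv(w_0^{(j)},\dots,w_{d-1}^{(j)})$, the polytope $P$ is triangulated by the pyramidal simplices $\conv(\{v\}\cup\Delta_j)$. For each such simplex the lattice volume in $M$-coordinates is the absolute value of the determinant with columns $w_i^{(j)}-v$, and expanding this determinant along the last column (all entries of which are $-v_d$) immediately yields
\[
\Vol\conv(\{v\}\cup\Delta_j) \;=\; |v_d|\cdot\bigl|\det(w_1^{(j)}-w_0^{(j)},\dots,w_{d-1}^{(j)}-w_0^{(j)})\bigr|\;=\;\Ht_F(v)\cdot\Vol\Delta_j,
\]
where on the right we recognize the lattice volume of $\Delta_j$ in the $L$-basis. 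Summing over $j$ gives the claim by additivity of lattice volume.

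The main obstacle, I expect, is not the determinant expansion (routine) but verifying the pyramid structure $P=\conv(\{v\}\cup F)$ from the hypothesis about facets, together with the bookkeeping that aligns the two lattice normalizations: the choice of $e_d$ with $\lambda(e_d)=1$ is what forces $|\lambda(v)|$ to equal the $d$-th coordinate and simultaneously makes $e_1,\dots,e_{d-1}$ a $\ZZ$-basis of $L$, so that the $(d-1)$-dimensional determinant really is the lattice volume on $H$. Once that alignment is in place, everything collapses to the Euclidean pyramid formula rescaled by the correct factor of $d$.
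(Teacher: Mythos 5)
Your proof is correct, but note that the paper itself does not prove Proposition~\ref{pyramid}: it simply refers to \cite[Prop.~1]{Descent}, so there is no in-paper argument to compare against. Your write-up is a valid self-contained substitute. The two ingredients are exactly the right ones: (i) the combinatorial step that the hypothesis forces $P=\conv(\{v\}\cup F)$ --- your argument via ``a vertex is the intersection of the facets containing it'' is sound (the auxiliary remark that a vertex lies on at least $\dim P$ facets is true but not what you actually use); and (ii) the lattice bookkeeping, where choosing $e_d\in M$ with $\lambda(e_d)=1$ splits $M=L\oplus\ZZ e_d$ and aligns the normalizations of $\Vol$ on $\aff P$ and on $\aff F$, which is precisely where a naive Euclidean argument would lose a factor. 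Two cosmetic slips: with columns $w_i^{(j)}-v$ the constant entries $-v_d$ form the last \emph{row}, not the last column (harmless after transposing, and the clean route is to subtract the first column from the others before expanding); and the closing remark about ``rescaling by a factor of $d$'' is only a heuristic gloss on why no $1/d$ appears --- your determinant computation already establishes this directly, since the lattice normalization absorbs the factorials $d!$ and $(d-1)!$ whose ratio cancels the Euclidean $1/d$.
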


This is part of \cite[Prop. 1]{Descent}, to which we refer for the proof. The next basic result tells us how to compute the volume of a \emph{simplex}, which is a polytope of dimension $d$ with $d+1$ vertices.

\begin{proposition}\label{simplex}
Let $S\subset\RR^n$ be a rational simplex with vertices $v_0,\dots,v_d$. Choose a basis $u_1,\dots,u_d$ of the lattice $\aff (S-v_0) \cap \ZZ^n$. Define the $d\times d$ matrix $T=(t_{ij})$  by the representations $v_i-v_0=\sum_{j=1}^{d} t_{ij}u_j$, $i=1,\dots,d$. Then
$$
\Vol S =|\det T |.
$$
\end{proposition}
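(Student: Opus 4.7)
The plan is to reduce the statement to the very definition of lattice normalized volume on $\aff S$ by exhibiting $S$ as the image, under an appropriate linear map, of the standard unit simplex associated with the chosen basis $u_1,\dots,u_d$.

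First I would use translation invariance: by the definition of $\Vol$ recalled just above the proposition, $\Vol S=\Vol(S-v_0)$, and $S-v_0$ is a simplex with vertices $0,w_1,\dots,w_d$ where $w_i=v_i-v_0$. Moreover $0\in\aff(S-v_0)$ and the $w_i$ lie in the lattice $L=\aff(S-v_0)\cap\ZZ^n$ of rank $d$ in which $u_1,\dots,u_d$ is a $\ZZ$-basis. By the very definition of lattice normalized volume, the ``unit'' simplex $\Delta=\conv(0,u_1,\dots,u_d)$ satisfies $\Vol\Delta=1$.

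Next I would introduce the unique $\RR$-linear map $\varphi\colon\aff(S-v_0)\to\aff(S-v_0)$ determined by $\varphi(u_i)=w_i$ for $i=1,\dots,d$. The equation $w_i=\sum_j t_{ij}u_j$ shows that the matrix of $\varphi$ in the basis $u_1,\dots,u_d$ is (up to transpose) $T$, so $|\det\varphi|=|\det T|$. Moreover $\varphi(\Delta)=\conv(0,w_1,\dots,w_d)=S-v_0$. Since $\Vol$ on $\aff(S-v_0)$ is, by construction, the Lebesgue measure normalized so that $\Vol\Delta=1$, and Lebesgue measure scales by the absolute value of the determinant under a linear endomorphism, we get
$$
\Vol S=\Vol(S-v_0)=\Vol\varphi(\Delta)=|\det\varphi|\cdot\Vol\Delta=|\det T|,
$$
which is the desired formula.

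The main obstacle, which is really only a bookkeeping matter, is to justify cleanly that $\Vol$ on $\aff(S-v_0)$ transforms by $|\det\varphi|$ under $\varphi$: once one identifies $\aff(S-v_0)$ with $\RR^d$ via the basis $u_1,\dots,u_d$, this transformation law is just the standard change of variables for Lebesgue measure in $\RR^d$, and the normalization $\Vol\Delta=1$ matches the unit hypercube normalization up to the factor $1/d!$ which is already built into the ``simplex volume equals determinant'' convention. A brief check that the two normalizations are consistent with the definition given before the proposition completes the argument; no further ideas beyond those already developed in Section~\ref{prelim} are required.
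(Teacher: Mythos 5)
Your proposal is correct and follows essentially the same route as the paper, which simply invokes the transformation formula of Lebesgue measure (citing \cite[2.C]{BrGu} for an algebraic alternative); you have merely expanded that one-line justification into the explicit translation-plus-linear-map argument. The aside about the factor $1/d!$ is unnecessary, since the normalization in Section~\ref{prelim} is defined directly by $\Vol\conv(0,u_1,\dots,u_d)=1$ rather than via the unit cube, so $\Vol\Delta=1$ holds by definition.
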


This follows immediately from the transformation formula of Lebesgue measure.  See \cite[2.C]{BrGu} for an algebraic proof.

As mentioned already, we present rational polytopes $P$ in the form $P=C\cap H$ where $C$ is a pointed cone and $H$ is defined by the condition $\deg x = 1$ with a $\ZZ$-linear form $\deg$. This brings a second polytope into play, namely $\overline P=\conv(0,P)$ as in Figure \ref{fig_ig_vonv0P}.
\begin{figure}[hbt]
\begin{center}
\begin{tikzpicture}[scale=0.7]
\filldraw[color=yellow] (-2,2) -- (0,0) --(2,2) --cycle;
\draw  (-3,3) --(0,0) -- (3,3);	
\draw[thick, color=red] (-2,2) -- (2,2);
\draw (-4,2) -- (4,2);
\draw node at (1.5,2.3){$P$};
\draw node at (0.0, 1.2){$\overline P$};
\draw node at (3.0,1.6){$\deg = 1$};
\end{tikzpicture}
\end{center}\caption{$P$ and $\overline P$}
\label{fig_ig_vonv0P}
\end{figure}

 All algorithms of Normaliz compute $\Vol \overline P$, and then derive $\Vol P$ from it:

\begin{proposition}\label{conv_0}
With the notation introduced, let $L=\RR C \cap \ZZ^n$ and $\deg|L=k\deg'$ with a primitive linear form $\deg'$ on $L$ and $k>0$. Then
$$
\Vol P = k\Vol \overline P.
$$
\end{proposition}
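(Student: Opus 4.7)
The natural approach is to apply Proposition \ref{pyramid} to $\overline P$ with apex $0$ and base $P$, and then to extract the relationship with $k$ from the lattice height of $0$ over $\aff P$.

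First I would verify the hypotheses of Proposition \ref{pyramid} for $\overline P$. Because $\deg$ is strictly positive on $C\setminus\{0\}$, the origin lies off $\aff P$ and is therefore a vertex of $\overline P$. The facets of $\overline P=\conv(0,P)$ are $P$ itself together with the pyramids $\conv(0,G)$ over the facets $G$ of $P$; each such pyramid contains $0$, so $P$ is the unique facet of $\overline P$ avoiding the origin. Proposition \ref{pyramid} then yields
$$
\Vol \overline P = \Ht_P(0)\cdot \Vol P.
$$

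The core step is to show $\Ht_P(0) = 1/k$. Following the definition of lattice height, I translate by some $v_0\in P\cap\QQ^n$; the ambient affine hull $\aff(\aff P,\,0)=\RR C$ is preserved under this translation (since $v_0\in \RR C$), so the lattice relevant for the height is $L=\RR C\cap\ZZ^n$. The translated copy of $\aff P$ is the hyperplane $\{y\in \RR C : \deg y = 0\}$ of $\RR C$, and its intersection with $L$ equals $\ker(\deg'|_L) = \ker(\deg|_L)$ since $k\neq 0$. Consequently the primitive $\ZZ$-linear form on $L$ that vanishes on this hyperplane is $\pm\deg'$ by the very definition of $\deg'$, and therefore $\Ht_P(0) = |\deg'(-v_0)| = |\deg'(v_0)|$.

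To conclude, extend $\deg'$ from $L$ to $\QQ L = \RR C$ by $\QQ$-linearity; the identity $\deg|_L = k\deg'$ persists on all of $\RR C$, so $\deg(v_0)=1$ forces $\deg'(v_0)=1/k$. Combining with the displayed equation above gives $\Vol\overline P = \Vol P/k$, which is the claim.

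The only step requiring care is tracking the two different lattices in play: $L=\RR C\cap\ZZ^n$ governs both $\Vol\overline P$ and the height computation, whereas $\Vol P$ is defined through the sublattice $\aff P\cap\ZZ^n$ (translated to the origin). The entire content of the proposition is that the index $k$ encodes precisely the discrepancy between $\deg$ and the primitive form $\deg'$ on $L$, and hence the lattice height of the apex above the base.
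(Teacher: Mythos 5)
Your proposal is correct and follows essentially the same route as the paper: apply Proposition \ref{pyramid} to $\overline P$ with apex $0$ and unique opposite facet $P$, and identify $\Ht_P(0)=1/k$ via the primitive form $\deg'$. The paper's proof is just a terser version of your height computation, so no further comment is needed.
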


\begin{proof}
Let $F=P$ be the unique facet of $\overline P$ opposite to $0$. 
We can use $\deg'$ to measure lattice height over $F$. Since $\deg'x= (1/k)\deg x=1/k$ for $x\in F$, one has $\Ht_H(0)=1/k$, and the claim follows from Proposition \ref{pyramid}.
\end{proof}

The number $k$ in Proposition \ref{conv_0} is called the \emph{grading denominator} in Normaliz. The reason is that $\deg'=\deg/k$ is considered as the ``true'' grading on the cone $C$. The user can choose between the given grading $\deg$ or the divided one, $\deg'$.

As our final tool we formulate a homogeneous version of Proposition \ref{simplex}:

\begin{proposition}
Let the simplex $S$ be given in the form $S=C\cap H$ where $H$ is the hyperplane of degree $1$ points and $C= \RR_+S$. Let $v_1,\dots,v_d$, $d=\dim S+1$, be nonzero points in the $d$ extreme rays of $C$, for example the extreme integral generators. Then
$$
\Vol \overline S= \frac{1}{g_1\cdots g_d}|\det T|,\qquad g_i=\deg v_i, \ i=1,\dots,d,
$$
where $T=(t_{ij})$ is the $d\times d$ matrix with $v_i =\sum_j t_{ij} u_j$ for a basis $u_1,\dots,u_d$ of the lattice $L=\ZZ^n\cap \RR S$.
\end{proposition}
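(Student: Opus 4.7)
The plan is to reduce the statement to Proposition \ref{simplex} applied to the simplex $\overline S = \conv(0,S)$, whose vertices are exactly $0$ together with the vertices $w_1,\dots,w_d$ of $S$. The key observation is that each $w_i$ lies on the extreme ray $\RR_+v_i$ and satisfies $\deg w_i=1$, so we must have $w_i = v_i/g_i$ where $g_i=\deg v_i$.

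First I would verify that the right ambient lattice for applying Proposition \ref{simplex} to $\overline S$ with distinguished vertex $v_0=0$ is indeed $L=\ZZ^n\cap\RR S$. Since $0\in\overline S$, we have $\aff(\overline S - 0)=\aff(\overline S)=\RR S$ (the latter equality because $\overline S$ contains $0$ and its extreme rays span the cone $C$, hence $\RR C=\RR S$). Thus $u_1,\dots,u_d$ as in the statement is exactly a basis of the lattice called for in Proposition \ref{simplex}.

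Next I would apply Proposition \ref{simplex} directly: it gives $\Vol\overline S=|\det T'|$, where $T'=(t'_{ij})$ is the matrix defined by $w_i-0=\sum_j t'_{ij}u_j$. Substituting $w_i = v_i/g_i$ and using $v_i=\sum_j t_{ij}u_j$ yields $t'_{ij}=t_{ij}/g_i$, so $T'$ is obtained from $T$ by scaling the $i$-th row by $1/g_i$. Therefore
$$
\det T' = \frac{1}{g_1\cdots g_d}\,\det T,
$$
and taking absolute values gives the claimed formula.

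There is really no hard step here; the only point requiring a moment of care is the identification of the lattice $L$ on $\RR S$ with the lattice on $\aff(\overline S - 0)$ used implicitly in Proposition \ref{simplex}, which as noted above is immediate because $0\in\overline S$ forces $\aff \overline S = \RR S$.
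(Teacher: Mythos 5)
Your proposal is correct and follows exactly the paper's argument: apply Proposition \ref{simplex} to $\overline S$ with $v_0=0$, note that the remaining vertices are $v_i/g_i$, and extract the factor $1/(g_1\cdots g_d)$ from the determinant by row scaling. The extra care you take in identifying the lattice $L$ is a fine (if routine) addition, but the route is the same.
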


This follows immediately from Proposition \ref{simplex} if we set $v_0 = 0$, observing that $v_1/g_1,\dots,\allowbreak v_d/g_d$ are the remaining vertices of $\overline S$.

\section{Volume algorithms in Normaliz}\label{vol_alg}

There are three basic algorithms:
\begin{enumerate}
\item the \emph{primal} volume algorithm: Normaliz computes a lexicographic triangulation, and finds the volume as the sum of the volumes of the simplices in the triangulation;
\item volume by \emph{descent in the face lattice}: there is a reverse lexicographic triangulation  in the background, but it is not computed explicitly;
\item volume by \emph{signed decomposition}, the \emph{Lawrence algorithm}: Normaliz computes a triangulation of the dual cone and converts it into a signed decomposition of the polytope.
\end{enumerate}

Normaliz also computes the exact volume of full dimensional polytopes defined over real algebraic number fields. For them  only (1) is implemented at present. One could extend (3) to them, whereas (2) is not suitable. The algorithms (1) and (3) are also used in the computations of integrals of rational polynomials over polytopes.

By rule of thumb one can say that  the best choice is
\begin{enumerate}
	\item if the polytope has few vertices, but potentially many facets;
	\item  if the number of vertices and the number of facets are of the same order of magnitude;
	\item  if there are \emph{very} few facets and many vertices.
\end{enumerate}
This recommendation will be confirmed by the computational data in Section \ref{bench}. There are variants:

\begin{enumerate}
	\item[(a)] exploitation of isomorphism types of faces in the descent algorithm;
	\item[(b)] symmetrization as explained below.
\end{enumerate}

Normaliz checks the default conditions of the algorithms in the order
\begin{center}
	signed decomposition $\to$  descent  $\to$ symmetrization.
\end{center}
If the default conditions are not satisfied for any of them, the primal triangulation algorithm is used. These decisions must often be made on the basis of partial information. Therefore it can be useful to choose a certain variant explicitly or to exclude others. The exploitation of isomorphism types must always be asked for by the user. 

Normaliz uses OpenMP for parallelization. Unless the user insists on computations with GMP integers, Normaliz tries $64$ bit arithmetic first, and restarts the computation with GMP integers if it recognizes an overflow.

\subsection{The primal volume algorithm}

Mathematically there is not much to say: if a polytope $P$ is decomposed into simplices with non-overlapping interiors, then its volume is the sum of the volumes of the simplices forming the decomposition.
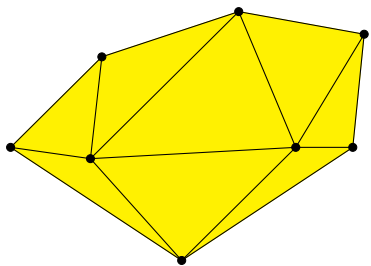
\begin{figure}[hbt]
\begin{center}
	\begin{tikzpicture}[scale=1.5]
	\filldraw[color=yellow] (0,0) -- (1.5,1) -- (1.6,2) -- (0.5,2.2) -- (-0.7,1.8) -- (-1.5,1) -- cycle;
	\draw (0,0) -- (1.5,1) -- (1.6,2) -- (0.5,2.2) -- (-0.7,1.8) -- (-1.5,1) -- cycle;
	\filldraw[fill=black] (0,0)  circle (1pt);
	\filldraw[fill=black] (1.5,1)  circle (1pt);
	\filldraw[fill=black] (1.6,2)  circle (1pt);
	\filldraw[fill=black] (0.5,2.2)  circle (1pt);
	\filldraw[fill=black] (-0.7,1.8)  circle (1pt);
	\filldraw[fill=black] (-1.5,1)  circle (1pt);
	\filldraw[fill=black] (1,1)  circle (1pt);
	\filldraw[fill=black] (-0.8,0.9)  circle (1pt);
	\draw (1,1) --  (-0.8,0.9);
	\draw (-1.5,1) --  (-0.8,0.9);
	\draw (0,0) --  (-0.8,0.9);
	\draw (1,1) --  (0,0);
	\draw (1,1) --  (1.6,2);
	\draw (1,1) --  (1.5,1);
	\draw (1,1) --  ((0.5,2.2);
	\draw (-0.8,0.9) --  (0.5,2.2);
	\draw (-0.8,0.9) --  (-0.7,1.8);
	\end{tikzpicture}
\end{center}\caption{A triangulation}
\label{fig_triang}
\end{figure}
Since the computation of Hilbert bases and Hilbert series is based on (lexicographic) triangulations as well, Normaliz has a sophisticated  algorithm for them, using pyramid decomposition; see \cite{BIS}. Normaliz tries to avoid determinant computations by the ``exploitation of unimodularity''; see \cite[Prop. 7]{BIS}.

\subsection{Volume by descent in the face lattice}

The idea is to exploit the following proposition:

\begin{proposition}\label{decomp}
Let $P\subset\RR^n$ be a rational polytope, and $v\in P$. Then
\begin{equation}
\Vol P =\sum_{F\text{ facet of }P} \Ht_F(v)\Vol F .\label{Vol_dec}
\end{equation}
\end{proposition}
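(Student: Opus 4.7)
The plan is to realize $P$ as a union of pyramids with common apex $v$, one over each facet, and then reduce to Proposition \ref{pyramid} on each piece. For every facet $F$ of $P$ set $P_F := \conv(\{v\}\cup F)$. The first step is to establish the combinatorial decomposition
$$
P = \bigcup_{F\text{ facet}} P_F, \qquad \relint P_F \cap \relint P_{F'} = \emptyset \text{ for } F\neq F'.
$$
The covering part follows by ray shooting: for $x\in P\setminus\{v\}$, the ray from $v$ through $x$ leaves $P$ at a boundary point $y$, and $y$ lies in (at least) one facet $F$, so $x$ belongs to $\conv(v,y)\subset P_F$. For non-overlap, if $x\in\relint P_F$ then the corresponding exit point $y$ lies in $\relint F$, hence in no other facet, which forces uniqueness of the containing $P_F$.

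Next I compute $\Vol P_F$ for each facet. If $v\in F$, the pyramid degenerates and has lower dimension than $P$, hence contributes $\Vol P_F=0$, consistent with $\Ht_F(v)=0$. If $v\notin F$, then $v$ is a vertex of the polytope $P_F$; every facet of $P_F$ other than $F$ itself has the form $\conv(\{v\}\cup G)$ for some facet $G$ of $F$ and therefore contains $v$. Thus $F$ is the unique facet of $P_F$ not containing $v$, and Proposition \ref{pyramid} applies to give
$$
\Vol P_F = \Ht_F(v)\,\Vol F.
$$

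The final step is to invoke additivity of lattice normalized volume under the decomposition above (pieces with pairwise disjoint relative interiors) to conclude
$$
\Vol P = \sum_{F\text{ facet}} \Vol P_F = \sum_{F\text{ facet}} \Ht_F(v)\,\Vol F,
$$
which is the asserted formula \eqref{Vol_dec}.

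The main obstacle I expect is handling the case in which $v$ lies on the boundary of $P$: then several of the $P_F$ degenerate (whenever $v\in F$), and one must verify that the union of the remaining, full-dimensional $P_F$ still covers $P$ up to a set of measure zero. The cleanest way is to treat $v\in\inte P$ first, where the ray-shooting argument is immediate, and then note that on the boundary the degenerate pyramids are themselves supported in lower-dimensional facets and so contribute nothing to volume, while the ray-shooting argument restricted to the nondegenerate facets still exhausts $P$. Alternatively one can argue by a limiting procedure, approximating $v$ by interior points and using continuity of $\Ht_F(v)$ in $v$.
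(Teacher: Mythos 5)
Your proof is correct and follows exactly the route the paper takes: the paper's entire argument is the one-line observation that the pyramids $\conv(v,F)$ form a polyhedral decomposition of $P$, after which Proposition \ref{pyramid} applies to each piece. You have simply filled in the details (ray shooting for the covering, the degenerate pyramids with $v\in F$ contributing $\Ht_F(v)=0$), all of which are handled correctly.
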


Proposition \ref{decomp} follows immediately from Proposition \ref{pyramid} since the polytopes $\conv(v,F)$ constitute a polyhedral decomposition of $P$.  Usually $v$ is a vertex of the polytope $P$ with as few opposite facets $F_i$ as possible, as illustrated by Figure \ref{fig_decomp}.
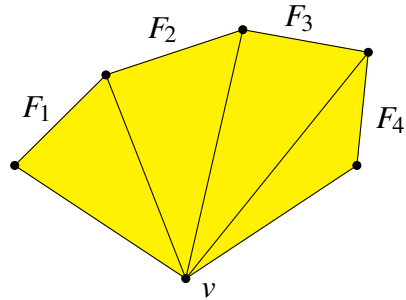
\begin{figure}[hbt]
\begin{center}
	\begin{tikzpicture}[scale=1.5]
	\filldraw[color=yellow] (0,0) -- (1.5,1) -- (1.6,2) -- (0.5,2.2) -- (-0.7,1.8) -- (-1.5,1) -- cycle;
	\draw (0,0) -- (1.5,1) -- (1.6,2) -- (0.5,2.2) -- (-0.7,1.8) -- (-1.5,1) -- cycle;
	\draw (0,0) -- (0.5,2.2);
	\draw (0,0) -- (1.6,2);
	\draw (0,0) -- (-0.7,1.8);
	\draw node at (0.2,-0.1){$v$};
	\filldraw[fill=black] (0,0)  circle (1pt);
	\filldraw[fill=black] (1.5,1)  circle (1pt);
	\filldraw[fill=black] (1.6,2)  circle (1pt);
	\filldraw[fill=black] (0.5,2.2)  circle (1pt);
	\filldraw[fill=black] (-0.7,1.8)  circle (1pt);
	\filldraw[fill=black] (-1.5,1)  circle (1pt);
	\draw node at (-1.3,1.5){$F_1$};
	\draw node at (-0.2,2.2){$F_2$};
	\draw node at (1.0,2.3){$F_3$};
	\draw node at (1.8,1.4){$F_4$};
	\end{tikzpicture}
\end{center}\caption{Pyramid decomposition of a polytope}\label{fig_decomp}
\end{figure}
The recursive application results  in building a \emph{descent system}, i.e., a subset $\mathcal F$ of the face lattice so that for each face $F\in\mathcal F$, to which \eqref{Vol_dec} is applied, all facets of $F$ that are opposite to the selected vertex are contained in $\mathcal F$. However, if a face is simplicial, its multiplicity is computed by the standard determinant formula. The algorithm is implemented in such a way that all data are collected in the descent and no backtracking is necessary. The RAM usage is essentially determined by the two largest layers. For a detailed discussion we refer the reader to \cite{Descent}.

\subsection{Exploitation of isomorphism classes}

If the integral automorphism group of the cone $C$ over the polytope $P$ is large enough, one can expect that each face in the descent system $\cF$ has many isomorphic copies in $\cF$. These can be detected and identified so that only one representative of every isomorphism class must be kept in $\cF$. This reduces $\cF$ in size and can significantly speed up the volume computation. It must be used with care since the computation of isomorphism classes is rather expensive. See \cite{nmz_auto} for a discussion of their computation.

If the polytope is specified by generators and the number of facets is large, then the first step in the descent system is built differently. Normaliz computes the automorphism group of the polytope and selects one representative in each orbit of facets. The vertex $v$ above is replaced by a fix point of the automorphism group, and the first step in the volume computation is the formula
$$
\Vol P =\sum_{i=1}^c O(F_i)\Ht_{F_i}(v) \Vol F_i
$$
where $F_i$ represents one of the $c$ orbits and $O(F_i)$ is the number of facets in the orbit of $F_i$. Then $F_1,\dots,F_c$ form the first layer in the descent system. This allows the application of descent in cases where the number of facets is too large for a successful computation without exploitation of isomorphism classes. If the user does not prohibit it, Normaliz encodes isomorphism classes by their SHA256 checksums.

\subsection{Symmetrization}
To understand the computation of volumes through symmetrization one must take a detour through Ehrhart series. As usual, assume that our polytope $P$ is given as the intersection $P=C\cap H$ where $C\subset \RR^d$ is a pointed rational cone and $H=\{x\in  \RR^d: \deg x=1\}$ is the hyperplane of degree $1$ points. For symmetrization we assume that $\deg$ is primitive.

Under certain conditions one can count lattice points of degree $k$, $k\in \NN$, in $C$ by mapping $C$ to a cone $C'$ of lower dimension and then counting each degree $k$ lattice point $y$ in $C'$ with the number of its lattice preimages. This approach works well if the number of preimages is given by a polynomial in the coordinates of $y$. Since $C'$ has lower dimension, one can hope that its combinatorial structure is much simpler than that of $C$. One must of course pay a price: instead of counting each lattice point with the weight $1$, one must count it with a polynomial weight.

The availability of this approach depends on symmetries in the coordinates of $C$, and therefore we call it \emph{symmetrization}. Normaliz tries symmetrization under the following condition: $C$ and the relevant lattice are given by constraints (inequalities, equations, congruences) and the inequalities contain the sign conditions $x_i\ge 0$ for all coordinates $x_i$ of $C$. Then Normaliz groups coordinates that appear in all constraints and the grading~(!) with the same coefficients, and, roughly speaking, replaces them by their sum. The number of preimages that one must count for the vector $y$ of sums is then a product of binomial coefficients---a polynomial as desired. More precisely, if $y_j$, $j=1,\dots,m$, is the sum of $u_j$ variables $x_i$ then
$$
f(y)=\binom{u_1+y_1-1}{u_1-1}\cdots \binom{u_m+y_m-1}{u_m-1}.
$$
is the number of preimages of $(y_1,\dots,y_m)$.

Since the Lebesgue measure can be approximated by scaled counting measures, one obtains
$$
\Vol P=\int_P  h\,d\negthinspace\lambda
$$
where $h$ is the highest homogeneous component of $f$ with respect to total degree, and $\lambda$ is the suitably scaled Lebesgue measure.  We learnt this approach from Sch\"urmann~\cite{Sch}. The Normaliz algorithm for integrals is described in \cite{BS}. This note contains a complete elementary treatment and several references to advanced aspects.

Symmetrization can have stunning effects. Nevertheless we do not include it in the computations of Section \ref{bench} since it does not help for any of them, at least not in the present implementation. Plenty of examples are contained in \cite{BIS2}, where it is often very useful in the computation of Hilbert series. 

\subsection{Volume by signed decomposition}\label{signed_dec}

This algorithm uses that a generic triangulation of the dual cone induces a signed decomposition of the primal polytope, as we will now explain.

Let $C\subset \RR^d$ be a pointed cone of dimension $d$ (it is important that $C$ is full dimensional). The polytope $P$ is the intersection of $C$ with the hyperplane $H$ defined by a grading $\deg$: $H=\{x:\deg(x)=1\}$. The grading is an interior element of the dual cone $C^*=\{\lambda\in(\RR^d)^*:\lambda(x)\ge 0 \text { for all }x\in C  \}$. In order to visualize the situation we take an auxiliary (irrelevant) cross-section $Q$ of the dual cone as in Figure \ref{fig_square}.
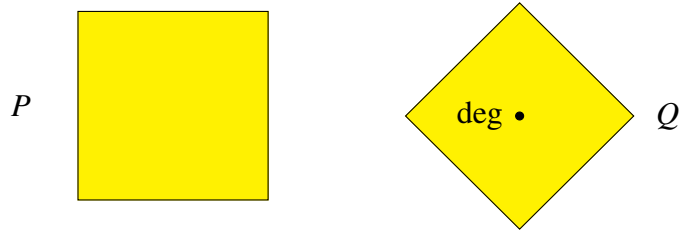
\begin{figure}[hbt]
\begin{center}
	\begin{tikzpicture}[scale=2.5]
	\filldraw[color=yellow] (0,0) -- (1,0) -- (1,1) -- (0,1) -- cycle;
	\draw (0,0) -- (1,0) -- (1,1) -- (0,1) -- cycle;
	\draw node at (-0.3,0.5){$P$};
	\draw node at (0,-0.1){};
	\end{tikzpicture}
	\qquad\qquad
	\begin{tikzpicture}[scale=1.5]
	\filldraw[color=yellow] (1,0) -- (0,1) -- (-1,0) -- (0,-1) -- cycle;
	\draw (1,0) -- (0,1) -- (-1,0) -- (0,-1) -- cycle;
	\draw node at (1.3,0){$Q$};
	\filldraw (0,0) circle (1pt);
	\draw node at (-0.35,0){$\deg$};
	\end{tikzpicture}
\end{center}\caption{A square and a cross-section of the dual cone}\label{fig_square}
\end{figure}

Now suppose that we have a \emph{generic} triangulation $\Delta$ of the dual cone where genericity is defined as follows: $\deg$ is not contained in any hyperplane through a facet of any $\delta\in\Delta$. Let $\delta\in\Delta$ be given, and denote the linear forms on $(\RR^d)^*$ defining its facets by $\ell_1,\dots\ell_d\in (\RR^d)^{**} = \RR^d$. ( $\ell_1,\dots\ell_d$ are the extreme rays of the dual of $\delta$.) The hyperplanes defined by the vanishing of $\ell_1,\dots\ell_d$ decompose $(\RR^d)^*$ into ``orthants'' that can be labeled by a sign vector $\sigma=(s_1,\dots,s_d)\in \{ \pm 1  \}^d$:
$$
D(\delta,\sigma)=\{\alpha: (-1)^{s_i} \ell_i(\alpha) \ge 0   \}.
$$
By the assumption on $\deg$, there is \emph{exactly on}e sign vector $\sigma$ such that $\deg $  lies in the interior of $ D(\delta,\sigma)$. Consequently the hyperplane $H$ intersects the dual  $D(\delta,\sigma)^*$ in a polytope $R_\delta$. Set $e(\delta)=|\{i: s_i=-1 \}|$.

Let $\iota_X$ denote the indicator function of a subset $X\subset \RR^d$. Then
\begin{equation}
\iota_P(x) = \sum_{\delta\in \Delta} (-1) ^{e(\delta)} \iota_{R_\delta}(x)\label{iota}
\end{equation}
for all $x\in\RR^d$ outside a union of finitely many hyperplanes. Since volume (lattice normalized or Euclidean) is additive on indicator functions, this formula can be used for the computation $\Vol P$, and more generally for the computation of integrals over~$P$.

In order to find a generic triangulation, Normaliz first computes a triangulation $\Delta$ of $C^*$ and saves the induced \emph{hollow triangulation} $\Gamma$ that $\Delta$ induces on the boundary of $C^*$. Then it finds a \emph{generic} element $\omega\in C^*$ such that the \emph{star triangulation} $\Sigma$ of $C^*$, in which every simplicial cone is generated by the \emph{center} $\omega$  and a facet of the hollow triangulation, is generic. Figure \ref{fig_lawrence} illustrates the signed decomposition of a square into $4$ simplices.
\begin{figure}[hbt]
\begin{center}
	\begin{tikzpicture}[scale=1.5]
	\filldraw[color=yellow] (0,0) -- (1,0) -- (1,1) -- (0,1) -- cycle;
	\draw (0,0) -- (1,0) -- (1,1) -- (0,1) -- cycle;
	\draw node at (0.5,0.5){$P$};
	\draw node at (0,-0.1){};
	\draw (-1.3,1) -- (1,1) -- (1,-1.3) -- cycle;
	\draw (1,0) -- (-0.3,0);
	\draw (0,1) -- (0,-0.3);
	\draw node at (0.3,0.3){$+$};
	\draw node at (-0.5,0.5){$-$};
	\draw node at (0.45,-0.45){$-$};
	\draw node at (-0.2,-0.2){$+$};
	\end{tikzpicture}
	\qquad\qquad
	\begin{tikzpicture}[scale=1.5]
	\filldraw[color=yellow] (1,0) -- (0,1) -- (-1,0) -- (0,-1) -- cycle;
	\draw (1,0) -- (0,1) -- (-1,0) -- (0,-1) -- cycle;
	\draw node at (1.3,0){$Q$};
	\filldraw (0,0) circle (1pt);
	\draw node at (-0.3,-0.1){$\deg$};
	\filldraw (0.3,0.3) circle (1pt);
	\draw (-1,0) -- (0.3,0.3) -- (0,1);
	\filldraw (0.3,0.3) circle (1pt);
	\draw (1,0) -- (0.3,0.3) ;
	\draw (0,-1) -- (0.3,0.3) ;
	\draw (0,-1) -- (0.3,0.3) ;
	\draw node at (0.45,0.05){$\omega$};
	\draw node at (-0.3,-0.45){$+$}; 
	\draw node at (0.5,0.4){$+$};
	\draw node at (-0.3,0.5){$-$};  
	\draw node at (0.5,-0.3){$-$};	
	\end{tikzpicture}
\end{center}\caption{Generic triangulation of the dual and signed decomposition}
\label{fig_lawrence}
\end{figure}
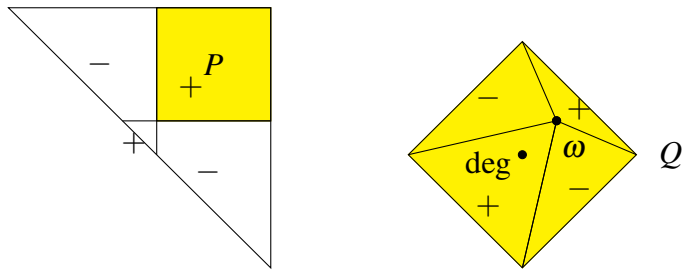 

The algorithm has been developed by Lawrence \cite{Law} in the language of linear programming. We have learnt it from Filliman's paper \cite{Fil}, which contains a proof of equation \eqref{iota}. See  Beck, Haase and Sottile \cite{BHS} for an elementary approach and the relationship to other duality theorems.

\section{The implementation of the Lawrence algorithm}\label{Lawrence}

The complete course of the volume computation consists of $4$ steps that can be clearly delimited from each other:
\begin{enumerate}
	\item computation of a triangulation $\Delta$ of $C^*$;
	\item computation of the induced hollow triangulation $\Gamma$;
	\item choice of the generic element $\omega$;
	\item evaluation of the star triangulation $\Sigma$.
\end{enumerate}
Each of the $4$ steps is highly critical if one wants to reach the applications in social choice that were our driving challenge. For (1) we could essentially rely on the standard triangulation algorithm of Normaliz. Step (2) and the arithmetic for (3) and (4) are described in the following. Both (3) and (4) are iterations over the hollow triangulation and star triangulations derived from it.

\subsection{The hollow triangulation} 
Suppose the triangulation $\Delta$ of $C^*$ has been computed. For each simplicial cone $\delta\in\Delta$ we must now find the facets of $\delta$ that lie in the boundary of $C^*$. There are various solutions for this task. The first that comes to mind is to compute the facets of $C^*$ and match the facets of $\delta$ with it. But $C^*$ can have an enormous number of facets that one does better not compute since they can easily exhaust RAM. The facets of $C^*$ are of course extreme rays of the cone $C$ over $P$, but for signed decomposition Normaliz only computes them if asked for by the user. A second approach that is much better in terms of RAM is to compute the facets of $\delta$ and select those that have all extreme rays of $C^*$ on the same side as $\delta$. However, this requires an enormous number of scalar products that in high dimension are expensive. 

Instead of these geometric approaches,  Normaliz uses only the combinatorics of triangulations of cones. It forms the set of all pairs $(F,\delta)$ where $F$ is a facet of $\delta$, $\delta\in \Delta$. Both components are encoded as $0$-$1$-vectors that indicate the extreme rays of $C^*$ spanning $F$ and $\delta$, respectively. From this set one must discard all pairs $(F,\delta)$ for which there exists a pair $(F,\delta')$, $\delta'\in\Delta$, $\delta'\neq \delta$. In  principle one could eliminate all $F$ that appear a second time without remembering the ``mother'' $\delta$, but $\delta$ helps in several ways. The first is that one can store the hollow triangulation as  a set of pairs $(\delta,\phi(\delta))$ where $\phi(\delta)$ is a second $0$-$1$-vector indicating those extreme rays of $\delta$ whose omission yields a facet of the hollow triangulation.

In order not to blow up memory for large $\Delta$, the pairs $(\delta,\phi(\delta))$ are computed in small portions controlled by ``patterns''. Each pattern is an increasing sequence $(p_1,\dots,p_r)$ of indices, and it is required that the facet $F$ satisfies the following condition: if $q_1,\dots, q_{d-1}$, $d=\dim C^*$, are the indices of the extreme rays of $F$ in ascending order, then $q_i=p_i$ for $i=1,\dots,r$.

\subsection{Piggyback simplices}\label{piggy}
After the purely combinatorial computation of the hollow triangulation, arithmetic must be used in steps (3) and (4) above, namely in finding a generic linear form $\omega\in C^*$ and then in the volume computation. Both tasks are accelerated significantly if one takes advantage of the fact that simplices $G$ and $G'$ of the star triangulation are in ``piggyback'' position to each other, if the facets $F$ and $F'$ of the hollow triangulation that define them belong to the same simplex $\delta\in\Delta$. By ``piggyback'' position we mean that the simplices share a facet and lie on different sides of it, as indicated in Figure~\ref{fig_piggy}.
\begin{figure}[hbt]
\begin{center}
\begin{tikzpicture}[scale = 1.0]
\filldraw[color=yellow] (-2.5,1) -- (0,2.5) -- (1.5,1.7) -- (0,0) --cycle;
\draw (-2.5,1) -- (0,2.5) -- (1.5,1.7) -- (0,0) --cycle;
\draw (-2.5,1) -- (1.5,1.7);
\draw node at (1.0,2.3){$F$};
\draw node at (1.0,0.5){$F'$};
\draw node at (-0.3,1.7){$G$};
\draw node at (-0.3,0.8){$G'$};
\draw node at (-2.8,1.1){$\lambda_1$};
\draw node at (0,2.7){$\lambda_3$};
\draw node at (1.7,1.7){$\lambda_2$};
\draw node at (0,-0.3){$\mu$};
\end{tikzpicture}	
\end{center}\caption{Piggyback simplices}
\label{fig_piggy}	
\end{figure}
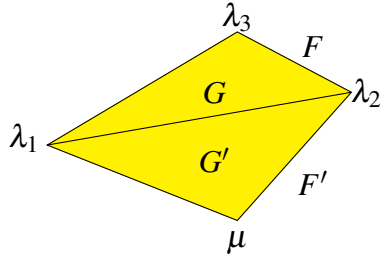

Suppose that $G$ has extreme rays $\lambda_1,\dots,\lambda_d\in (\RR^d)^*$ and its facets are given by linear forms $\ell_1,\dots,\ell_d\in \RR^d = (\RR^d)^{**}$ where $\ell_d$ defines the common facet, $\lambda_1,\dots,\lambda_{d-1}$ are the extreme rays shared by $G$ and $G'$ and $\mu$ is the remaining extreme ray of $G'$. Then the facets of $G'$ are determined by $-\ell_d$ and
\begin{equation}
k_i = -\ell_d(\mu)\ell_i + \ell_i(\mu)\ell_d, \qquad i=1,\dots,d-1. \label{FM}
\end{equation}
Since the computation of the $k_i$ and $-\ell_d$ from $\lambda_1,\dots,\lambda_{d-1},\mu$ alone amounts to the inversion of a matrix, it is clear that the use of \eqref{FM} is a significant advantage, even if the computation of the values $\ell_i(\mu)$ needs $d^2$ multiplications.There is actually no need to compute the $k_i$ completely. We will only need their values on elements in the dual space, for which the values of the $\ell_i$ are known, for example the degree. 

In the primal space, $\ell_1,\dots,\ell_{d}$ and $k_1,\dots,k_{d-1},-\ell_d$ are extreme rays of the simplices dual to $G$ and $G'$. For the volume of the corresponding simplices we need the determinants. By standard rules
$$
|\det (k_1,\dots,k_{d-1},-\ell_d) | = |\ell_d(\mu)^{d-1}\det (\ell_1,\dots,\ell_{d})|.
$$
So the piggyback relation between $G$ and $G'$ pays off a second time.

In dealing with the simplices of the star triangulation that belong to the same simplicial cone $\delta$ of $\Delta$, we pick one of them, say $G_1$, and take all others piggyback. For $G_1$ we must indeed invert the matrix $M$ with rows $\lambda_1,\dots,\lambda_d$ over $\QQ$, using $D=|\det M|$ as the denominator: $M^{-1}= (1/D)N$ with a matrix $N\in \ZZ^{d\times d}$. After extraction of their greatest common divisors, the columns $\ell_1,\dots,\ell_d$ of $N$ are the support forms of $G_1$, equivalently, the extreme integral generators of the dual cone of $G_1$. For the volume computation we need the determinant of the matrix $N'$ with columns $\ell_1,\dots,\ell_d$. There is no need to compute it directly: since $MN'$ is the diagonal matrix with entries $\ell_i(\lambda_i)$, $i=1,\dots,d$, one has
$$
\det N' =\frac{\prod_{i=1}^d \ell_i(\lambda_i)}{\det M},
$$
and $\det M$ has already been computed.

\subsection{Finding a generic element}
The generic element $\omega$ of $C^*$ must satisfy the following condition: the grading $\deg$ does not lie on any hyperplane through a facet of a simplicial cone in the hollow triangulation and  $\omega$. But this condition is symmetric in $\omega$ and $\deg$! It is much better to first take the star triangulation with center $\deg$ and check that $\omega$ is not on any of the critical hyperplanes. The main difference between $\deg$ and $\omega$ is the size of the coordinates: those of $\deg$ are usually very small and those of $\omega$ very large. So, in working with $\deg$ as the center, there is a very good chance to get away with $64$ bit arithmetic. The computation based on $\omega$ which is necessary for the volume, must very often be done with GMP integers.

Instead of choosing one vector $\omega$ at random and verifying that it is generic, Normaliz takes two vectors $\omega_1$ and $\omega_2$ and checks that not both of them lie together on a critical hyperplane. If this condition is satisfied, then a suitable linear combination $\omega=a_1\omega_1+a_2\omega_2$, $a_1,a_2\in\ZZ$, $a_1,a_2\ge 0$, is generic. For the check we apply the piggyback trick of Section \ref{piggy}, and especially  \eqref{FM}. If $\omega_1$ and $\omega_2$ do not work, then their coordinates are increased.

\subsection{The addition of fractions}
There is one more extremely critical aspect, namely the addition of fractions. If one accumulates the volume as an alternating sum of simplex volumes by successive addition of fractions, one can easily spend $99\%$ of the computation time on this addition, or, in extreme situations, not finish at all, creating fractions whose numerators and denominators fill gigabytes. The reason is that the extreme integral generators of the duals of cones involving the generic element can be very large: we must divide by them.

In extreme cases there is no other choice but to work with fixed precision. If the user asks for it, then the simplex volumes are still computed exactly, but for addition they are truncated  to a fixed number of decimal digits. The default choice is truncation to an integral multiple of $10^{-100}$. With this choice the final volume is computed up to an error $\le |\Gamma|\cdot 10^{-100}$. The user can set a higher or lower precision.

For computations with full precision, Normaliz uses an \emph{addition pyramid}, an extensible vector $(q_0,\dots,q_n)$ of rational numbers. It serves as an accumulator. A new summand $s$ is added to $q_0$, provided the number of summands that have already been accumulated in $q_0$ is smaller than the \emph{capacity}. If the capacity has been reached, then $q_0$ is added to $q_1$ and set to $s$. The addition of $q_0$ to $q_1$ is handled in the same way as that of $s$ to $q_0$ etc. At present the capacity is $8$. This scheme has proved to be very efficient. Of course, at the very end, all entries of the pyramid must be added. 

\begin{remark}\label{fail_vinci}
vinci \cite{vinci} contains the Lawrence algorithm in a floating point implementation. As the authors state in \cite{practical},  it is numerically unstable, and our preceding discussion should also indicate this problem. 

If one has a look at the volumes whose alternating sum must be formed, then they easily reach absolute values of $10^{100}$. For any precision of their alternating sum, which may be of order $10^{-6}$, one therefore needs a very high number of significant digits that a standard floating point format does not offer. 

For example, this becomes visible already in the comparison of $4$ voting schemes for $4$ candidates  \cite[Sect.\ 6.1]{Descent}. Let $P$ be the corresponding polytope. With its algorithm HOT (based on the same principle as descent) vinci correctly computes the Euclidean volume of $1.260510232743\cdot 10^{-25}$, for $\overline P$, whereas the vinci Lawrence algorithm yields  $9.287423132835\cdot 10^{-8}$. (We are grateful to Bogdan Ichim for these computations.) For this reason we are not comparing the Lawrence algorithms in Normaliz and vinci.
\end{remark}   

\section{Computational data}\label{bench}

 All computations have been done on the Dell R640 server of the Institute of Mathematics at Osnabrück. It is equipped with two Intel Xeon Gold 6152 cards (a total of $44$ cores) and $1$ TB of RAM. The computations use $3 2$ parallel threads (of the maximum of $88$). The listed times are ``wall clock'' times. In order to avoid overloading the tables, information about RAM usage has been inserted into the text.
 
That we allow $32$ threads for a computation does of course not mean that they can be used. The percentage of CPU that the computations got varies from $\sim 400\%$ to almost $3200\%$.

\subsection{Polytopes defined by vertices}

The computation times for several polytopes defined by vertices are listed in Table \ref{tab_vert}. In the tables, $\dim$ is the dimension of the cone $C$ over the polytope, \#ext the number of extreme rays of $C$ and $\sup$ the number of its support hyperplanes. The size of the triangulation computed by the primal triangulation is to be found in the column \#tri. The number of determinants computed is usually smaller.

\begin{table}[hbt]
\begin{tabular}{rrrrrrrrr}
\strut & \multicolumn{4}{c}{combinatorial data}&\qquad & \multicolumn{3}{c}{computation times}   \\
\cline{2-5} \cline{7-9}
\strut & dim &\#ext & \#supp &\#tri  && primal & descent & isotypes\\
\midrule[1.2pt]
\strut\ttt{lo-6} & 16 & 720&  910 & $5.8\cdot 10^9$   && 19:20.80 m & 3:17.48 m & 0:04.57 m\\	
\strut\ttt{lo-7} & 22 & 6040 & 87,472 &&&&&21:39:51 h\\
\midrule[1.2pt]
\strut \ttt{cr-20}& 21& 40& $2^{20}$ & $2^{19}$ && 0:08.50 m&0:08.94 m& 0:15.02 m\\
\strut \ttt{cr-24}& 25& 48& $2^{24}$ & $2^{23}$ && 2:11.91 m & 3:29.67 m& 5:42.24 m \\
\strut \ttt{cr-28}& 29& 48& $2^{28}$ & $2^{27}$ && 42:12.11 m& 1:39:37 h& 2:21:09 h\\
\midrule[1.2pt]
\strut \ttt{A543} & 36 & 60 & 29,387 &$103\cdot 10^6$ & & 0:24.09 m& 36:56.56 m& 0:18.59 m  \\
\strut \ttt{A553} & 43 & 75 & 306,955 & $9.2\cdot 10^9$  && 44:53.26 m&& 7:10.36 m\\
\strut \ttt{cy-60}& 17 & 60 & 656,100 & & & 0:46.45 m&& 0:44.27 m \\
\midrule[1.2pt]
\end{tabular}
\vspace*{1ex}\caption{Polytopes defined by vertices}\label{tab_vert}
\end{table}

\subsubsection{Linear ordering polytopes}

\verb|Lo<n>| is the linear ordering polytope for a set of $n$ elements. 
These polytopes have been investigated in combinatorial optimization; see \cite{MaRe}. The maximum $n$ reachable is $7$. For $n=8$ not even the number of facets is known. It is however $> 800\cdot 10^6$. The computation of the volumes is surprisingly fast if one exploits the isomorphism classes of faces. Note that for $n=7$ the computation of the support hyperplanes alone takes $ > 20$ h so that the computation of the volume needs $\sim 1$~h.

The maximum RAM usage of \verb|lo-6| is $2.1$ GB for descent, the other two algorithms need $< 1.5$ GB. The computation for \verb|lo-7|  takes $14.7$ GB.

\subsubsection{Cross polytopes}

\verb|cr-<n>| is the unit cross polytope of dimension $n$. We have computed their (known) volumes for $n=20,24,28$. They have only $2n$ vertices, but $2^n$ facets. But all facets are simplices, and therefore the descent algorithm and its variant exploiting isomorphism classes are applicable. However, the primal algorithm behaves better for two reasons: (i) it avoids the administrative overhead of the descent algorithm, and (ii) the formation of the single orbit of facets takes rather long---it cannot be parallelized. So the saving in the computations of determinants is overcompensated. Since all facets of a cross polytope are simplicial, there is only one descent step, namely from the full polytope to the facets opposite to the chosen vertex.

For $n=20$ the primal algorithm gets away with $713$ MB, whereas the two descent algorithms need about $1$ GB. For $n=24$ the numbers are $4.2$ GB and $21$ GB. For $n=28$ they rise to $194$ GB and $\sim 300$ GB. It takes a lot of space to accommodate the $2^{28}$ extreme rays.

\subsubsection{Other polytopes}
\verb|A543| and \verb|A553| are taken from the Ohsugi-Hibi classification \cite{OH} of polytopes related to contingency tables. \verb|A553| shows that descent with isomorphism types can be favorable if the automorphism group of the polytope is sufficiently large. For \verb|A543| this effect is already visible, but still small. This applies to \verb|cy-60| as well, the cyclotomic polytope of order $60$ defined by Beck and Ho\c sten \cite{BeH}. That the pure descent algorithm is not suitable for this type of polytope is shown by \verb|A543|.

The RAM usage of \verb|A543| is about $1.6$ GB for the primal algorithm and $1$ GB for descent with isomorphism classes. For \verb|A553 | the corresponding numbers are $4.3$ GB and $101$ GB. For \verb|cy-60| they are $316$ MB and $1.4$ GB.

\subsection{Polytopes defined by inequalities}

We now turn to polytopes defined by inequalities and equations. Among them we have chosen Birkhoff polytopes, cubes and polytopes from social choice---as said already, the latter were the driving challenge for our implementation of the Lawrence algorithm. In the tables, \#tri dual is the size of the triangulation of the dual cone, and \#hollow that of the associated hollow triangulation.

\subsubsection{Birkhoff polytopes}

The Birkhoff polytope of order $n$ is the set of doubly stochastic $n\times n$ matrices. Its vertices are the $n\times n$ permutation matrices, and their number $n!$ is rapidly growing. Their volumes have been computed for $n\le 10$ by Beck and Pixton \cite{BP} with residue methods that are not (yet) available in Normaliz. 

For $n\le 5$ any of the Normaliz algorithm does the job very quickly, but 
for $n=6$ the primal algorithm must already give up since the lexicographic triangulation becomes too large. In the table we start with this case. The Lawrence algorithm reaches $n=8$. As one can see, even the triangulations of the dual cone grow too quickly for the next step. The bulk of the computation time for $n=8$ is taken by the computation of the hollow triangulation, namely $\sim 13$ h. The coordinates of the generic element $\omega$ are small enough to allow 64 bit arithmetic for the volume computation ($< 3$ h).
\begin{table}[hbt]
\begin{tabular}{rrrrrrrrr}
\strut & \multicolumn{4}{c}{combinatorial data}&\qquad & \multicolumn{3}{c}{computation times}   \\
\cline{2-6} \cline{8-9}
\strut & dim &\#ext & \#supp &\#tri dual & \#hollow  &&  isotypes & signed dec\\
\midrule[1.2pt]
\strut \verb|bi-6| & 26 &  720& 36&  142,755 & 933,120 && 0:03.64 m &  0:03.10 m\\
\strut \verb|bi-7| & 37 & 5040& 49& $11\cdot 10^6$ & $85\cdot 10^6$ &&38:05.85 m &  5:36.92 m \\
\strut \verb|bi-8| &  50 & 40,320 & 64& $1.2\cdot 10^9$ & $11\cdot 10^9$ && &  17:59:20 h\\
\midrule[1.2pt]
\strut \verb|cu-20|& 21& $2^{20}$ & 40& $2^{19}$ & $2^{20}$&& 0:08.73 m & 0:07.00 m \\
\strut \verb|cu-24|& 25& $2^{24}$ & 48& $2^{23}$ & $2^{24}$&& 4:56.11 m&2:12.64 m \\
\strut \verb|cu-28|& 29& $2^{28}$ & 56& $2^{27}$ & $2^{28}$&& 1:53:27 h & 1:01:37 h\\
\midrule[1.2pt]
\end{tabular}
\vspace*{1ex}\caption{Birkhoff polytopes and cubes}
\label{tab_bi_ciube}
\end{table}

RAM usage for \verb|bi-6| is $44$  MB and $473$ MB. The computations for \verb|bi-7| take $7.9$ GB and $12.5$ GB. That for \verb|bi-8| needs $216$ GB.

\subsubsection{Cubes}
\verb|cu-<n>| is the unit cube of dimension $n$. It is a good test object since its volume is known. Since faces of the same codimension are isomorphic, descent with isomorphism types is expected to be fast, and it is indeed. However, it must use the huge number of vertices explicitly, and for this reason signed decomposition is even faster. The Normaliz binary in the distribution does never reach any of these algorithms since Normaliz recognizes parallelotopes $P$, computes the volume of a ``corner simplex'' and multiplies it by $n!$, $n=\dim  P$. This takes $\sim 0.01$ s, even for $n=28$. It would certainly be possible to go to $n=32$ with the Lawrence algorithm or descent with isomorphism types.

In \cite{Descent} the reader can find performance data for the descent algorithm applied to \verb|cu-20| and \verb|cu-24|. In addition, more general parallelotopes of the same dimensions are computed there. The computation  times show that the arithmetic is secondary and the times are dominated by the combinatorial complexity.

RAM usage for \verb|cu-20| is $\sim 1.1$ GB for both algorithms, and for \verb|cu-24| we need $23$ GB and $2$ GB. \verb|cu-28| takes $428$ GB and $127$ GB. That signed decomposition is so much better, is due to the fact that it does not store the extreme rays.

\subsubsection{Polytopes from social choice}
Computational data for polytopes from social choice are contained in \cite{BIS2} for the primal algorithm and symmetrization and in  \cite{Descent} for descent in the  face lattice. Voting schemes with $5$ candidates are essentially inaccessible to them. The Lawrence algorithm has now reached them, and \cite{5cand} contains data for them. Tables \ref{tab_combi5} \ref{tab_5cand} are imported from there. The names of the polytopes are explained in \cite{5cand}.

For two polytopes we have included the number of extreme rays to show the order of magnitude. For the Lawrence algorithm they are not needed explicitly, and in particular they need not be stored.

\begin{table}[hbt]
\begin{tabular}{rrrrrr}
\midrule[1.2pt]
\strut                            & dim $C$ & \#ext &$\#$ supp & $\#$ tri dual &  $\#$ hollow   \\
\midrule[1.2pt]
\strut \ttt{strictBorda 4cand}    &   24    &   &       33       &            100,738 &         324,862 \\
\hline
\strut \ttt{CondEffAppr 4cand}     &   74    &    &     80       &              1,620,052 &         30,564,920 \\
\hline
\strut \ttt{Condorcet}            &  120    &  290,064   &     124       &            137,105 &       6,572,904 \\
\hline
\strut \ttt{PlurVsRunoff}         &  120    &  80,912,472   &    125       &          4,912,369 &      93,749,784 \\
\hline
\strut \ttt{CWand2nd}             &  120    &      &   126       &         15,529,730 &     608,572,514 \\
\hline
\strut \ttt{CondEffPlurRunoff}    &  120    &      &   127       &        246,310,369 &   5,456,573,880 \\
\hline
\strut \ttt{CondEffPlur}          &  120    &      &   128       &      2,388,564,481 & 39,390,184,920 \\
\midrule[1.2pt]
\end{tabular}
\vspace*{1ex}
\caption{Combinatorial data} \label{tab_combi5}
\end{table}

The stages (1)--(3) of all computations could be done by $64$ bit arithmetic, and this holds even for the volume computations of the first and third polytope. The volume computations of the two largest had to be done with fixed precision.

For the two largest examples it was necessary to use distributed computation on a high performance cluster (indicated by HPC). For this reason we have split the computation times. For \verb|CondEffPlur| on the HPC of the University of Osnabrück the time was $< 9$ h. We refer the reader to \cite{5cand} for more information. 
\begin{table}[hbt]
\begin{tabular}{rrrrr}
\midrule[1.2pt]
\strut                 & \multicolumn{1}{c}{RAM} & \multicolumn{3}{c}{time }   \\
\cline{3-5}
\strut                 &  \multicolumn{1}{c}{in GB}&  \multicolumn{1}{c}{stages (1) -- (3)} & \multicolumn{1}{c}{stage (4)} &  \multicolumn{1}{c}{total} \\
\midrule[1.2pt]
\strut \ttt{strictBorda 4cand}     &   0.35 &     1.278 s  &     0.464 s &    1.742 s \\
\hline
\strut \ttt{CondEffAppr 4cand}      &   7.4 &     97.8 s  &     14:31 m &    16:09 m \\
\hline
\strut \ttt{Condorcet}             &   1.67 &    18.0 s  &    52.493 s &     1:10 m \\
\hline
\strut \ttt{PlurVsRunoff}          &  26.2 &     12:40 m  &   1:29:21 h &  1:42:01 s \\
\hline
\strut \ttt{CWand2nd}              &  56.4 &     49:55 m  &  10:21:36 h & 11:11:31 h \\
\hline
\strut \ttt{CondEffPlurRunoff}     & 113 &  13:30:22 h  &   HPC          &  --- \\
\hline
\strut \ttt{CondEffPlur}           & 646    & 125:27:20 h   &   HPC         &  --- \\
\midrule[1.2pt]
\end{tabular}
\vspace*{1ex}
\caption{Memory usage and times for parallelized volume computations} \label{tab_5cand}
\end{table}

\end{document}